\title{Sequential identification of nonignorable missing data mechanisms}
\author{Mauricio Sadinle and Jerome P. Reiter
\\\\
\textsc{Duke University} }
\newtheorem{theorem}{Theorem}
\newtheorem{definition}{Definition}
\newtheorem{remark}{Remark}
\newtheorem{example}{Example}
\newcommand{\bX}{X}%\newcommand{\bX}{\mathbf X}
\newcommand{\bx}{x}%\newcommand{\bx}{\mathbf x}
\newcommand{\mcX}{\mathcal X}
\newcommand{\bM}{M}%\newcommand{\bM}{\mathbf M}
\newcommand{\bm}{m}%\newcommand{\bm}{\mathbf m}
\newcommand\indep{\perp\!\!\!\perp}
\begin{document}
\maketitle

\begin{abstract}
With nonignorable missing data, 
likelihood-based inference should be based on the joint distribution of the study variables and their missingness indicators.  
These joint models cannot be estimated from the data alone, thus
requiring the analyst to impose restrictions that make the models uniquely obtainable 
from the distribution of the observed data.  We present an approach for
constructing classes of identifiable nonignorable missing data models.  
The main
idea is to use a sequence of carefully set up identifying assumptions, whereby we
specify potentially different missingness mechanisms for different blocks of variables.  
 We show that the procedure results in models with the desirable
 property of being non-parametric saturated.
\end{abstract}

\textit{Key words and phrases:} Identification; Non-parametric saturated; Missing not at random; Partial ignorability; Sensitivity analysis.

\section{Introduction}

When data are missing not at random (MNAR) (\cite{Rubin76}),  
appropriate likelihood-based inference requires explicit models for
the {\em full-data distribution}, i.e., 
the joint distribution of the study variables and their missingness indicators. 
Because of the missing data, this distribution is not 
uniquely identified from the observed data alone (\cite{LittleRubin02}).
To enable inference, analysts must impose restrictions on the full-data distribution.
Such assumptions generally are untestable; however, a minimum
desideratum is that they result in a unique full-data distribution for the {\em
  observed-data distribution} at hand, i.e., the distribution that can be
identified from the incomplete data. 

We present a strategy for constructing identifiable
full-data distributions with nonignorable missing data. In its most
general form, the strategy is to expand the observed-data distribution
sequentially by identifying parts of the full-data distribution associated with blocks
of variables, one block at a time.  This partitioning of the variables allows analysts
to specify different missingness mechanisms in the different blocks; for
example, use the missing at random (MAR, \cite{Rubin76}) assumption for some variables and a nonignorable missingness assumption
for the rest to obtain a partially ignorable mechanism (\cite{HarelSchafer09}). We ensure that the resulting full-data distributions are {\em non-parametric saturated}
(NPS, \cite{Robins97}), that is, their implied observed-data distribution  matches the actual observed-data distribution, as detailed in Section \ref{ss:NPS}.  

Related approaches to partitioning variables with missing data have
appeared previously in the literature. 
\cite{ZhouBCMAR} proposed to
model blocks of study variables and their missingness indicators in a
sequential manner; however, their approach does not guarantee
identifiability of the full-data distribution.  \cite{HarelSchafer09}
mentioned the possibility of treating the missingness in
blocks of variables differently, but they do not provide 
results on identification.  \cite{Robins97} proposed 
the group permutation missingness mechanism, which assumes MAR sequentially
for blocks of variables and results in a NPS model.  This is a
particular case of our more general procedure, as we describe in
Section \ref{ss:Robins}. 

The remainder of the article is organized as follows.  In Section
\ref{s:background}, we describe notation and provide more details on
the NPS property.  In Section \ref{s:SeqIdent}, we introduce our strategy
for identifying a full-data distribution in a sequential manner. In Section \ref{s:Examples} we present some examples of how to use this strategy for the case of two categorical study variables, for the construction of partially ignorable mechanisms, and for sensitivity analyses.  Finally, in
Section \ref{s:disc} we discuss possible future uses of our identifying
approach.
\par

\markboth{\hfill{\footnotesize\rm MAURICIO SADINLE AND JEROME P. REITER} \hfill}
{\hfill {\footnotesize\rm SEQUENTIAL IDENTIFICATION OF NONIGNORABLE MISSING DATA} \hfill}

\section{Notation and Background}\label{s:background}

\subsection{Notation}\label{notation}

Let $\bX=(X_1,\ldots,X_p)$ denote $p$ random variables taking values
on a sample space $\mcX$. Let $M_j$ be the missingness indicator for
variable $j$, where $M_j=1$ when $X_j$ is missing and $M_j=0$ when
$X_j$ is observed.  Let $\bM=(M_1,\ldots,M_p)$, which takes values on
$\{0,1\}^p$.  Let $\mu$ be a dominating measure for the
distribution of $\bX$, and let $\nu$ represent the product measure
between $\mu$ and the counting measure on $\{0,1\}^p$.  The full-data
distribution is the joint distribution of $\bX$ and $\bM$. We call its density $f$ with respect to $\nu$ the
{\em full-data density}.  In practice, the full-data distribution
cannot be recovered from sampled data, even with an infinite sample size.

An element $\bm=(m_1,\ldots,m_p)\in \{0,1\}^p$ is called a {\em missingness pattern}.  Given $\bm\in\{0,1\}^p$ we 
define $\bar{\bm}=\mathbf{1}_p-\bm$ to be the indicator vector of observed variables, where $\mathbf{1}_p$ 
is a vector of ones of length $p$.  For each $\bm$, we define $\bX_{\bm}=(X_j: m_j=1)$ to be the missing variables and $\bX_{\bar{\bm}}=(X_j: \bar m_j=1)$ to be the observed variables, which have sample spaces $\mcX_{\bm}$ and $\mcX_{\bar{\bm}}$, respectively.  
The observed-data distribution is the distribution involving the observed variables and the missingness indicators, which has density  $f(\bX_{\bar{\bm}}=\bx_{\bar{\bm}},\bM=\bm)=\int_{\mcX_{\bm}}f(\bX=\bx,\bM=\bm)\mu(d\bx_{\bm})$, where $\bx\in\mcX$ represents a generic element of the sample space, and we define $\bx_{\bm}$ and $\bx_{\bar{\bm}}$ similarly as with the random vectors.

An alternative way of representing the observed-data distribution is
by introducing the {\em materialized variables} 
$\bX^*=(X_1^*,\dots,X_p^*)$, where
\begin{equation*}
X_j^* \equiv \left\{\begin{array}{cc}
        X_j, & \text{if } M_j=0;\\
        *, & \text{if } M_j=1;
        \end{array}\right.
\end{equation*}
and ``$*$'' is a placeholder for missingness.  The sample space $\mcX_j^*$ of each  $X_j^*$ is the union of $\{*\}$ and the sample space $\mcX_j$ of $X_j$.  The materialized variables contain all the observed information: if $X_j^*=*$ then $X_j$ was not observed, and if $X_j^*=x_j$ for any value $x_j\neq *$ then $X_j$ was observed and $X_j=x_j$.  Given $\bm\in\{0,1\}^p$ and
$\bx_{\bar{\bm}}\in\mcX_{\bar{\bm}}$, we define  $\bx^*\equiv\bx^*(\bm,\bx_{\bar{\bm}})$, 
such that
$\bx_{\bar{\bm}}^*=\bx_{\bar{\bm}}$ and $\bx^*_{\bm}=\boldsymbol{*}$,
where $\boldsymbol{*}$ is a vector with the appropriate number of $*$
symbols.  For example, if $\bm=(1,1,0)$ and $\bx_{\bar{\bm}}=x_3$, then $\bx^*=(*,*,x_3)$.  The event $\bX^*=\bx^*(\bm,\bx_{\bar{\bm}})$ is equivalent to $\bM=\bm$ and $\bX_{\bar{\bm}}=\bx_{\bar{\bm}}$, which implies that 
the distribution of $\bX^*$ is equivalent to the observed-data distribution. Therefore, with some abuse of notation,  
the observed-data density can be written in terms of $\bX^*$, that is $f\{\bX^*=\bx^*(\bm,\bx_{\bar{\bm}})\}\equiv f(\bX_{\bar{\bm}}=\bx_{\bar{\bm}},\bM=\bm)$.  When there is no need to refer to the $\bm$ and $\bx_{\bar{\bm}}$ that define $\bx^*$, we simply write $f(\bX^*=\bx^*)$ to denote the observed-data density evaluated at an arbitrary point.

In what follows we often write $f(\bX=\bx,\bM=\bm)$ simply as $f(\bX,\bM)$, $f(\bX^*=\bx^*)$ as $f(\bX^*)$, and likewise for other expressions, provided that there is no ambiguity. For the sake of simplicity, we use ``$f$'' for technically different functions, but their actual interpretations should be clear from the arguments passed to them.  For example, we denote the {\em missingness mechanism} as $f(\bM=\bm| \bX=\bx)$, or simply $f(\bM| \bX)$.
\par 

\subsection{Non-Parametric Saturated Modeling}\label{ss:NPS}

Since the true joint distribution of $\bX$ and $\bM$ cannot be identified from observed data alone, 
we need to work under the assumption that the full-data distribution falls within a class defined by a set of restrictions.  

\begin{definition}[Identifiability] Consider a class of full-data distributions $\mathcal{F}_A$ defined by a set of restrictions $A$.  We say that the class $\mathcal{F}_A$ is identifiable if there is a mapping from the set of observed-data distributions to $\mathcal{F}_A$.
\end{definition}

If we only require identifiability from a set of full-data distributions, two different observed-data distributions could map to the same full-data distribution. \cite{Robins97} introduced the stricter concept of a class of full-data
 distributions being non-parametric saturated --- also called
 non-parametric identified (\cite{Vansteelandtetal06, DanielsHogan08}).

\begin{definition}[Non-parametric Saturation] Consider a class of full-data distributions $\mathcal{F}_A$ defined by a set of restrictions $A$.  We say that the class $\mathcal{F}_A$ is non-parametric saturated if there is a \emph{one-to-one} mapping from the set of observed-data distributions to $\mathcal{F}_A$.
\end{definition}

The set $A$ of restrictions, or identifiability assumptions, that define a NPS class allow us to build a full-data distribution, say with density $f_A(\bX=\bx,\bM=\bm)$, from an observed-data distribution with density $f(\bX_{\bar{\bm}}=\bx_{\bar{\bm}},\bM=\bm)$, so that
 $f_A(\bX_{\bar{\bm}}=\bx_{\bar{\bm}},\bM=\bm) = f(\bX_{\bar{\bm}}=\bx_{\bar{\bm}},\bM=\bm)$,
where by definition  
$f_A(\bX_{\bar{\bm}}=\bx_{\bar{\bm}},\bM=\bm)  = \int_{\mcX_{\bm}}f_A(\bX=\bx,\bM=\bm)\mu(d\bx_{\bm})$. 
In terms of $\bX^*$, the NPS property is expressed as $f_A(\bX^*)=f(\bX^*)$.  

NPS is a desirable property, particularly for comparing inferences
under different approaches to handling nonignorable missing data.
When two missing data models satisfy NPS, we can be sure
that any discrepancies in inferences are due entirely to the different
assumptions on the non-identifiable parts of the full-data
distribution.  In contrast, without NPS, it can be 
difficult to disentangle what parts of the discrepancies are due to
the identifying assumptions and what parts are due to differing 
constraints on the observed-data distribution. Thus, NPS greatly
facilitates sensitivity analysis (\cite{Robins97}).

For a given $\bm$, we refer to the conditional
distribution of the missing study variables given the observed data as
the {\em missing-data distribution}---also known as the extrapolation
distribution (\cite{DanielsHogan08})---with density
$f(\bX_{\bm}=\bx_{\bm}| \bX_{\bar{\bm}}=\bx_{\bar{\bm}},\bM=\bm)$.
These distributions correspond to the non-identifiable parts of the
full-data distribution. A NPS approach is equivalent to
a recipe for building these distributions from the observed-data
distribution without imposing constraints on the latter.   

NPS models can be constructed in many ways. For example, in 
pattern mixture models,  
one can use the complete-case missing-variable restriction (\cite{Little93}), which sets   
 $f(\bX_{\bm}=\bx_{\bm}|
\bX_{\bar{\bm}}=\bx_{\bar{\bm}},\bM=\bm)=f(\bX_{\bm}=\bx_{\bm}|
\bX_{\bar{\bm}}=\bx_{\bar{\bm}},\bM=\mathbf{0}_p)$, for all
$\bm\in\{0,1\}^p$.  Although \cite{Little93} considered parametric models
for each $f(\bX_{\bar{\bm}}=\bx_{\bar{\bm}}|\bM=\bm)$, this does
not have to be the case, and therefore pattern mixture models can be
NPS.  Another example is the permutation missingness
model of \cite{Robins97}, which for a specific
ordering of the study variables 
assumes that the probability of observing the $k$th variable 
depends on the previous study variables and the subsequent
observed variables.  The group permutation missingness model of \cite{Robins97} is an analog of the latter for groups of variables and is also NPS. 
\cite{SadinleReiter17} introduced a missingness mechanism where each variable and its missingness indicator are conditionally independent given the remaining variables and missingness indicators, which leads to a NPS model.  \cite{Tchetgenetal16} proposed a NPS approach based on discrete choice models.  Finally, we note that MAR models also can be NPS, as shown by \cite{Gilletal97}. 
\par

\section{Sequential Identification Strategy}\label{s:SeqIdent}

We consider the $p$ variables as divided into $K$ blocks,
$\bX=(\bX_1,\dots,\bX_K)\allowbreak$, where $\bX_k=(X_{t_{k-1}+1},\dots \allowbreak ,X_{t_k})$, which contains $t_k-t_{k-1}$ variables.
As our results only
concern the identification of full-data distributions starting from an
observed-data distribution, we assume that $f(\bX^*)=f(\bX_1^*,\dots,\bX_K^*)$ is known.
The identification strategy consists of specifying a sequence of assumptions $A_{1},\dots,A_{K}$, one for each block of variables, where each $A_k$ allows us to identify 
the conditional distribution of $\bX_k$ and $\bM_k$ given  $\bX_{<k}\equiv(\bX_{1},\dots,\bX_{k-1})$, $\bX_{> k}^*\equiv(\bX_{k+1}^*,\dots,\bX_{K}^*)$, and a carefully chosen subset of the missingness indicators $\bM_{<k}\equiv(\bM_{1},\dots,\bM_{k-1})$ described below.  We first provide a general description of how $A_{1},\dots,A_{K}$ allow us to identify parts of the full-data distribution in a sequential manner, and then in Theorem \ref{th:ident} present the formal identification result.

\subsection{Description}\label{ss:description}

We now present the steps needed to implement the identification strategy.  A graphical summary of the procedure is provided in Figure \ref{fig:SeqIdent}.  

{\em Step 1}. Write $f(\bX^*)=f(\bX_1^*|\bX_{>1}^*)f(\bX_{>1}^*)$. Consider an identifiability assumption $A_1$ on the distribution of $\bX_1$ and $\bM_1$ given $\bX_{>1}^*$ that allows us to obtain a distribution with density $f_{A_1}(\bX_1,\bM_1|\bX_{>1}^*)$ with the NPS property $f_{A_1}(\bX_1^*|\bX_{>1}^*)=f(\bX_1^*|\bX_{>1}^*)$.  From this we can define $f_{A_1}(\bX_1,\bM_1,\bX_{>1}^*)\equiv f_{A_1}(\bX_1,\bM_1|\bX_{>1}^*) f(\bX_{>1}^*)$.

{\em Step 2}. Suppose we divide the $t_1$ variables in $\bX_1$ into two sets indexed by $R_1$ and $S_1$, where 
$R_1\cup S_1=\{1,\dots,t_1\}$ and $R_1\cap S_1=\emptyset$; see Remark 1 below for discussion of why one might want to do so.  Let $\bM_{R_1}$ and $\bM_{S_1}$ be the corresponding missingness indicators. 
We can write
$f_{A_1}(\bX_1,\bM_1,\bX_{>1}^*)=f_{A_1}(\bM_{S_1}|\bX_1,\bM_{R_1},\bX_{>1}^*)f_{A_1}(\bX_1,\bM_{R_1},\bX_{>1}^*)$, 
where $f_{A_1}(\bX_1,\bM_{R_1},\bX_{>1}^*)=f_{A_1}(\bX_2^*|\bX_1,\bM_{R_1},\bX_{>2}^*) f_{A_1}(\allowbreak \bX_1,\bM_{R_1},\bX_{>2}^*)$.
Consider an identifiability assumption $A_2$ on the distribution of $\bX_2$ and $\bM_2$ given $\bX_1, \bM_{R_1}$ and $\bX_{>2}^*$ that allows us to obtain a distribution with density $f_{A_{\leq 2}}(\bX_2,\bM_2|\bX_1,\bM_{R_1},\bX_{>2}^*)$ with the NPS property $f_{A_{\leq 2}}(\bX_2^*|\bX_1,\bM_{R_1},\bX_{>2}^*)=f_{A_1}(\bX_2^*|\bX_1,\bM_{R_1},\bX_{>2}^*)$.  From this we can define $$f_{A_{\leq 2}}(\bX_{\leq 2},\bM_{R_1},\bM_{2},\bX_{>2}^*)\equiv f_{A_{\leq 2}}(\bX_2,\bM_2|\bX_1,\bM_{R_1},\bX_{>2}^*)f_{A_1}(\bX_1,\bM_{R_1},\bX_{>2}^*).$$  The notation $f_{A_{\leq 2}}$
emphasizes that the distribution relies on $A_1$ and $A_2$.

{\em Step $k+1$}. At the end of the $k$th step we have $f_{A_{\leq k}}(\bX_{\leq k},\bM_{R_{k-1}},\bM_{k},\bX_{>k}^*)$.  Let $R_k\cup S_k=\{t_{k-1}+1,\dots,t_k\}\cup R_{k-1}$, $R_k\cap S_k=\emptyset$, and $\bM_{R_k}$ and $\bM_{S_k}$ be the corresponding missingness indicators.  We can write $f_{A_{\leq k}}(\bX_{\leq k},\bM_{R_{k-1}},\bM_{k},\bX_{>k}^*)=f_{A_{\leq k}}(\bM_{S_k}|\bX_{\leq k},\bM_{R_{k}},\bX_{>k}^*)f_{A_{\leq k}}(\bX_{\leq k},\bM_{R_{k}},\bX_{>k}^*)$, where $f_{A_{\leq k}}(\bX_{\leq k},\bM_{R_{k}},\bX_{>k}^*)=f_{A_{\leq k}}(\bX_{k+1}^*|\bX_{\leq k},\bM_{R_{k}},\bX_{>k+1}^*)f_{A_{\leq k}}(\bX_{\leq k},\bM_{R_{k}},\bX_{>k+1}^*).$  
Now, consider an identifiability assumption $A_{k+1}$ on the distribution of $\bX_{k+1}$ and $\bM_{k+1}$ given $\bX_{\leq k}, \bM_{R_{k}}$ and $\bX_{>k+1}^*$ that allows to obtain a distribution with density $f_{A_{\leq k+1}}(\bX_{k+1},\bM_{k+1}|\bX_{\leq k},\bM_{R_{k}},\bX_{>k+1}^*)$ with the NPS property $f_{A_{\leq k+1}}(\bX_{k+1}^*|\bX_{\leq k},\bM_{R_{k}},\bX_{>k+1}^*)=f_{A_{\leq k}}(\bX_{k+1}^*|\bX_{\leq k},\bM_{R_{k}},\bX_{>k+1}^*)$.  From this we can define 
\begin{align*}
&f_{A_{\leq k+1}}(\bX_{\leq k+1},\bM_{R_{k}},\bM_{k+1},\bX_{>k+1}^*) \\
&\hspace{1.5cm}
\equiv f_{A_{\leq k+1}}(\bX_{k+1},\bM_{k+1}|\bX_{\leq k},\bM_{R_{k}},\bX_{>k+1}^*)f_{A_{\leq k}}(\bX_{\leq k},\bM_{R_{k}},\bX_{>k+1}^*).
\end{align*}

{\em Step $K$}. For the final step, assumption $A_K$ is on the distribution of $\bX_{K}$ and $\bM_{K}$ given $\bX_{<K}$ and $\bM_{R_{K-1}}$.  Following  the previous generic identifying step, we obtain 
$f_{A_{\leq K}}(\bX,\bM_{R_{K-1}},\bM_{K}) \equiv f_{A_{\leq K}}(\bX_{K},\bM_{K}|\bX_{<K},\bM_{R_{K-1}})f_{A_{<K}}(\bX_{<K},\bM_{R_{K-1}})$.
We can obtain the implied distribution of the study variables, with density $f_{A_{\leq K}}(\bX)$, from this last equation. 

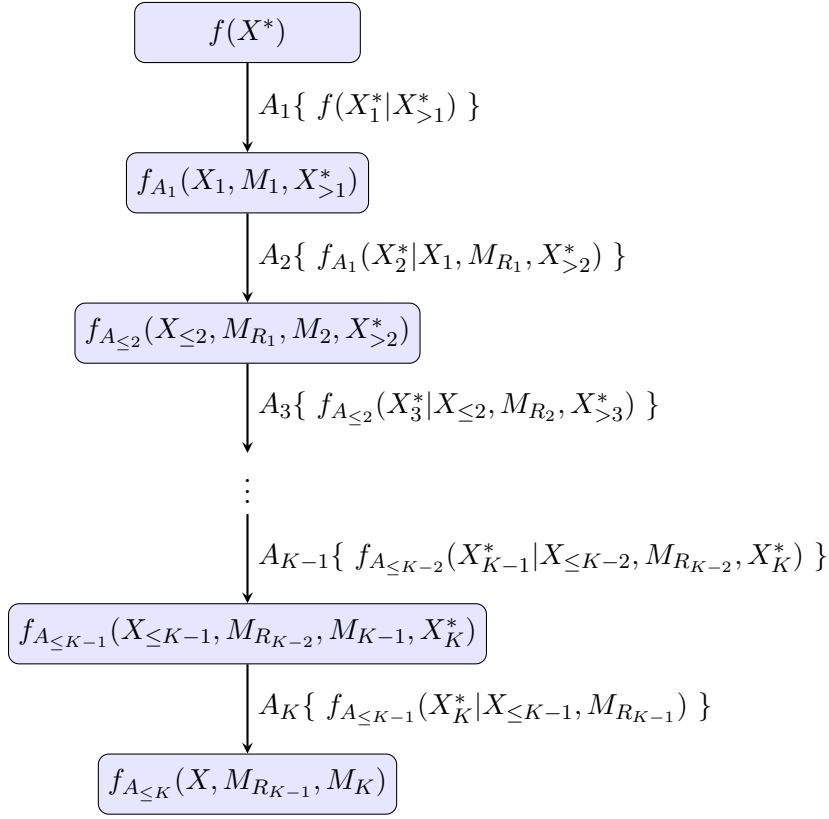
\begin{figure}[h]
\begin{center}
\begin{tikzpicture}[node distance=2cm]
\tikzstyle{startstop} = [rectangle, rounded corners, minimum width=3cm, minimum height=.8cm,text centered, draw=black, fill=blue!10]
\tikzstyle{empty} = [rectangle, rounded corners, minimum width=3cm, minimum height=.8cm,text centered]
\tikzstyle{arrow} = [thick,->,>=stealth]
\node (step0) [startstop] {$f(\bX^*)$};
\node (step1) [startstop, below of=step0] {$f_{A_1}(\bX_1,\bM_1,\bX_{>1}^*)$};
\node (step2) [startstop, below of=step1] {$f_{A_{\leq 2}}(\bX_{\leq 2},\bM_{R_1},\bM_2,\bX_{>2}^*)$};
\node (step3) [empty, below of=step2] {$\vdots$};
\node (step4) [startstop, below of=step3] {$f_{A_{\leq K-1}}(\bX_{\leq K-1},\bM_{R_{K-2}},\bM_{K-1},\bX_{K}^*)$};
\node (step5) [startstop, below of=step4] {$f_{A_{\leq K}}(\bX,\bM_{R_{K-1}},\bM_{K})$};

\draw [arrow] (step0) -- node[anchor=west] {$A_1\{~f(\bX_1^*|\bX_{>1}^*)~\}$} (step1); % $A_1\{~f(\bX_1,\bM_1|\bX_{>1}^*)~\}$
\draw [arrow] (step1) -- node[anchor=west] {$A_2\{~f_{A_1}(\bX_2^*|\bX_1,\bM_{R_1},\bX_{>2}^*)~\}$} (step2);
\draw [arrow] (step2) -- node[anchor=west] {$A_3\{~f_{A_{\leq 2}}(\bX_3^*|\bX_{\leq 2},\bM_{R_2},\bX_{>3}^*)~\}$} (step3);
\draw [arrow] (step3) -- node[anchor=west] {$A_{K-1}\{~f_{A_{\leq K-2}}(\bX_{K-1}^*|\bX_{\leq K-2},\bM_{R_{K-2}},\bX_{K}^*)~\}$} (step4);
\draw [arrow] (step4) -- node[anchor=west] {$A_K\{~f_{A_{\leq K-1}}(\bX_K^*|\bX_{\leq K-1},\bM_{R_{K-1}})~\}$} (step5);
\end{tikzpicture}
\end{center}
\caption{Sequential identification strategy.  We write $A_k\{~f_{A_{\leq k-1}}(\bX_k^*|\dots)~\}$ to indicate that assumption $A_k$ is being used to obtain a conditional full-data density $f_{A_{\leq k}}(\bX_k,\bM_k|\dots)$ from $f_{A_{\leq k-1}}(\bX_k^*|\dots)$.} \label{fig:SeqIdent}
\end{figure}

\begin{remark}  
The main characteristic of the $R_k$ subsets is that if an index does not appear in $R_{k-1}$, then it cannot appear in $R_{k}$, unless it is one of $t_{k-1}+1,\dots,t_k$.  The flexibility in the choosing of these subsets gives flexibility in the setting up of the identifiability assumptions: different versions of our identification approach can be obtained by making assumptions conditioning on different subsets of the missingness indicators.  As long as the $R_k$ subsets satisfy $R_k\subseteq \{t_{k-1}+1,\dots,t_k\}\cup R_{k-1}$, Theorem \ref{th:ident} guarantees that the final full-data distribution is NPS.
\end{remark}

\begin{remark}    The sequence for $A_1,\dots,A_K$ follows the order of the blocks $\bX_1,\dots,\bX_K$.  In many cases these blocks may not have a natural order.  Different orderings of the blocks lead to different sets of assumptions, thereby leading to different final full-data distributions and implied distributions of the study variables.  To clarify this point, suppose that we have three blocks of variables: demographic variables $\bX_D$, income variables $\bX_I$, and health-related variables $\bX_H$.  When $\bX_D$ is first in the order, $A_1$ concerns the distribution of $\bX_D$ and $\bM_D$ given $\bX_I^*$ and $\bX_H^*$; likewise, when $\bX_I$ is first in the order, $A_1$ concerns the distribution of $\bX_I$ and $\bM_I$ given $\bX_D^*$ and $\bX_H^*$.
Similarly, $A_2$ and $A_3$ also will change depending on the order of the variables, thereby implying changes in the final full-data distribution.
\end{remark}

\subsection{Non-Parametric Saturation}

The previous presentation makes it clear that the identifying assumptions $A_1,\dots,A_K$ allow us to identify $f_{A_{\leq K}}(\bX,\bM_{R_{K-1}},\bM_{K})$, and furthermore, $f_{A_{\leq k}}(\bM_{S_k}|\bX_{\leq k},\bM_{R_{k}},\bX_{>k}^*)$ for each $k<K$, although each of these conditional densities remains unused after step $k$ in the procedure.  
A full-data distribution with density $\tilde{f}_{A_{\leq K}}(\bX,\bM)$ that encodes $A_1,\dots,A_K$ can be expressed as 
$$\tilde{f}_{A_{\leq K}}(\bX,\bM)=f_{A_{\leq K}}(\bX,\bM_{R_{K-1}},\bM_{K})\tilde{f}_{A_{\leq K}}(\bM_{S_1},\dots,\bM_{S_{K-1}}|
\bX,\bM_{R_{K-1}},\bM_{K}),$$
where the second factor can be written as 
$\prod_{k=1}^{K-1}\tilde{f}_{A_{\leq K}}(\bM_{S_k}| \bX,\bM_{S_{>k}},\bM_{R_{K-1}},\bM_{K})$,
with $S_{>k}\equiv S_{k+1}\cup\dots\cup S_{K-1}$, and $\bM_{S_{>k}}\equiv (\bM_{S_{k+1}},\dots,\bM_{S_{K-1}})$.  From the definition of the sets $S_k$ and $R_k$, it is easy to see that $S_{>k}\cup R_{K-1}=R_{k}\cup\{t_k+1,\dots,t_{K-1}\}$, and therefore we can rewrite 
$\tilde{f}_{A_{\leq K}}(\bM_{S_k}| \bX,\bM_{S_{>k}},\bM_{R_{K-1}},\bM_{K})=\tilde{f}_{A_{\leq K}}(\bM_{S_k}| \bX,\bM_{R_{k}},\bM_{>k})$.

The sequential identification procedure does not identify any $\tilde{f}_{A_{\leq K}}(\bM_{S_k}| \bX,\bM_{R_{k}},\bM_{>k})$, but only $f_{A_{\leq k}}(\bM_{S_k}|\bX_{\leq k},\bM_{R_{k}},\bX_{>k}^*)$, that is, it identifies the distribution of $\bM_{S_k}$ given the variables $\bX_{\leq k}$, the missingness indicators $\bM_{R_{k}}$, and the materialized variables $\bX_{>k}^*$, but not given the missing variables among $\bX_{>k}$ according to $\bM_{>k}$.  Nevertheless, the full specification of $\tilde{f}_{A_{\leq K}}(\bM_{S_k}| \bX,\bM_{R_{k}},\bM_{>k})$ is irrelevant given that any such conditional distribution that agrees with $f_{A_{\leq k}}(\bM_{S_k}|\bX_{\leq k},\bM_{R_{k}},\bX_{>k}^*)$
would lead to the same $f_{A_{\leq K}}(\bX)$.  One such distribution is one where
\begin{equation}\label{eq:extra_assump}
\tilde{f}_{A_{\leq K}}(\bM_{S_k}| \bX,\bM_{R_{k}},\bM_{>k})=f_{A_{\leq k}}(\bM_{S_k}|\bX_{\leq k},\bM_{R_{k}},\bX_{>k}^*), ~k<K,
\end{equation}
that is, where the conditional distribution of $\bM_{S_k}$ given $\bX,\bM_{R_{k}}$ and $\bM_{>k}$ does not depend on the missing variables among $\bX_{>k}$ according to $\bM_{>k}$.  This guarantees the existence of a full-data distribution with density  
\begin{equation}\label{eq:f_tilde}
\tilde{f}_{A_{\leq K}}(\bX,\bM)=f_{A_{\leq K}}(\bX,\bM_{R_{K-1}},\bM_{K})\prod_{k=1}^{K-1}f_{A_{\leq k}}(\bM_{S_k}|\bX_{\leq k},\bM_{R_{k}},\bX_{>k}^*),
\end{equation}
which encodes the assumptions $A_1,\dots,A_K$.  Theorem \ref{th:ident} guarantees that this construction leads to NPS full-data distributions.  

\begin{theorem}\label{th:ident}
Let $R_1,\dots,R_{K-1}$ be a sequence of subsets such that $R_k\subseteq \{t_{k-1}+1,\dots,t_k\}\cup R_{k-1}$.  Let $A_1,\dots,A_K$ be a sequence of identifying assumptions, with each $A_k$ being an assumption on the conditional distribution of $\bX_k$ and $\bM_k$ given $\bX_{<k},\bM_{R_{k-1}}$, and $\bX^*_{>k}$, such that for a given density $g(\bX_{k}^*|\bX_{<k},\bM_{R_{k-1}},\bX^*_{>k})$, it allows the construction of a density $g_{A_{k}}(\bX_{k},\bM_k|\bX_{<k},\bM_{R_{k-1}},\bX^*_{>k})$ with the NPS property $g_{A_{k}}(\bX_{k}^*|\bX_{<k},\bM_{R_{k-1}},\bX^*_{>k})=g(\bX_{k}^*|\bX_{<k},\bM_{R_{k-1}},\bX^*_{>k}).$  Then, given an observed-data density $f(\bX^*)$, there exists a full-data density $\tilde{f}_{A_{\leq K}}(\bX,\bM)$ that encodes the assumptions $A_1,\dots,A_K$ and satisfies the NPS property $\tilde{f}_{A_{\leq K}}(\bX^*)=f(\bX^*)$.  
\end{theorem}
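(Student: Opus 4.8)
The plan is to prove the NPS property $\tilde{f}_{A_{\leq K}}(\bX^*)=f(\bX^*)$ by induction on the block index, peeling off one block at a time and invoking, at each step, the local NPS property supplied by $A_k$. Validity of $\tilde{f}_{A_{\leq K}}$ as a density would then come for free: once its materialized marginal is known to equal $f(\bX^*)$, its total mass equals that of $f(\bX^*)$, namely one, and nonnegativity is immediate from the product form of (\ref{eq:f_tilde}).

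First I would record the index bookkeeping underlying the construction. Starting from $R_0=\emptyset$ and using $R_k\cup S_k=\{t_{k-1}+1,\dots,t_k\}\cup R_{k-1}$ with $R_k\cap S_k=\emptyset$, a routine induction gives the disjoint decomposition $\{1,\dots,t_k\}=S_1\sqcup\dots\sqcup S_k\sqcup R_k$. This confirms that the intermediate object
\[
g_k\equiv f_{A_{\leq k}}(\bX_{\leq k},\bM_{R_{k-1}},\bM_k,\bX_{>k}^*)\prod_{j=1}^{k-1}f_{A_{\leq j}}(\bM_{S_j}|\bX_{\leq j},\bM_{R_j},\bX_{>j}^*)
\]
is a genuine joint density over $(\bX_{\leq k},\bM_{\leq k},\bX_{>k}^*)$: its core factor carries the indicators $\bM_{R_{k-1}},\bM_k$, which coincide with $\bM_{R_k},\bM_{S_k}$, while the product supplies $\bM_{S_1},\dots,\bM_{S_{k-1}}$, and together these exhaust $\bM_{\leq k}$. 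Note that $g_K$ is exactly the full-data density $\tilde{f}_{A_{\leq K}}(\bX,\bM)$ of (\ref{eq:f_tilde}), since $\bX_{>K}^*$ is empty and $\bX_{\leq K}=\bX$.

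I would then prove the induction hypothesis $H_k$: the materialized marginal of $g_k$, obtained by integrating out the missing components of $\bX_{\leq k}$ according to $\bM_{\leq k}$, equals $f(\bX^*)$. The base case $k=1$ reduces to the Step 1 factorization $g_1=f_{A_1}(\bX_1,\bM_1|\bX_{>1}^*)f(\bX_{>1}^*)$ together with the local NPS identity $f_{A_1}(\bX_1^*|\bX_{>1}^*)=f(\bX_1^*|\bX_{>1}^*)$, which immediately yields $g_1(\bX_1^*,\bX_{>1}^*)=f(\bX^*)$.

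The crux is the inductive step $k\to k+1$, which I expect to be the main obstacle. Materializing the block-$(k+1)$ variables in $g_{k+1}$ amounts to converting $(\bX_{k+1},\bM_{k+1})$ into $\bX_{k+1}^*$, i.e.\ integrating out the missing components of $\bX_{k+1}$. The key observation --- and the precise reason the extra assumption (\ref{eq:extra_assump}) is admissible --- is that every discarded conditional $f_{A_{\leq j}}(\bM_{S_j}|\bX_{\leq j},\bM_{R_j},\bX_{>j}^*)$ with $j\leq k$ depends on block $k+1$ only through $\bX_{k+1}^*$, which in turn depends only on $\bM_{k+1}$ and the observed part of $\bX_{k+1}$; hence these factors are constant in the variables being integrated out and pull outside the integral. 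What remains inside is the core factor, and the local NPS property at step $k+1$, namely $f_{A_{\leq k+1}}(\bX_{k+1}^*|\bX_{\leq k},\bM_{R_k},\bX_{>k+1}^*)=f_{A_{\leq k}}(\bX_{k+1}^*|\bX_{\leq k},\bM_{R_k},\bX_{>k+1}^*)$, collapses the materialized block-$(k+1)$ core factor back to $f_{A_{\leq k}}(\bX_{\leq k},\bM_{R_k},\bX_{>k}^*)$, using $(\bX_{k+1}^*,\bX_{>k+1}^*)=\bX_{>k}^*$. Multiplying this by the $j=k$ discarded conditional and using the recombination identity $f_{A_{\leq k}}(\bX_{\leq k},\bM_{R_k},\bX_{>k}^*)\,f_{A_{\leq k}}(\bM_{S_k}|\bX_{\leq k},\bM_{R_k},\bX_{>k}^*)=f_{A_{\leq k}}(\bX_{\leq k},\bM_{R_{k-1}},\bM_k,\bX_{>k}^*)$ reconstitutes precisely the core factor of $g_k$, while the leftover product over $j=1,\dots,k-1$ is unchanged. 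Thus the block-$(k+1)$ materialization of $g_{k+1}$ equals $g_k$, and applying $H_k$ yields $H_{k+1}$. Taking $k=K$ and recalling $g_K=\tilde{f}_{A_{\leq K}}(\bX,\bM)$ would give $\tilde{f}_{A_{\leq K}}(\bX^*)=f(\bX^*)$, completing the proof.
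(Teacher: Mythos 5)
Your proof is correct and is essentially the paper's own argument: the paper likewise establishes NPS by integrating out the missing variables of one block at a time, using exactly your three ingredients --- the retained conditionals $f_{A_{\leq j}}(\bM_{S_j}|\bX_{\leq j},\bM_{R_j},\bX_{>j}^*)$ depend on later blocks only through materialized variables and so pull out of the integral, the local NPS property of $A_{k+1}$ collapses the core factor, and $(\bM_{R_k},\bM_{S_k})$ recombines into $(\bM_{R_{k-1}},\bM_{k})$. The only difference is presentational: the paper peels blocks backward from $K$ and says the process ``can be repeated,'' whereas you package the identical single-block step as a formal induction with the explicit invariant $g_k$.
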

\begin{proof}  We explained how assumptions $A_1,\dots,A_K$ along with the extra assumption in \eqref{eq:extra_assump} lead to the full-data density in \eqref{eq:f_tilde}.  We now show the NPS property of \eqref{eq:f_tilde}.  
To start, we integrate \eqref{eq:f_tilde} over the missing variables in $\bX_{K}$ according to $\bM_{K}$.  Since none of the factors in
$\prod_{k=1}^{K-1}f_{A_{\leq k}}(\bM_{S_k}| \bX_{\leq k},\bM_{R_{k}},\allowbreak \bX_{>k}^*)$
depend on these missing variables, we obtain
\begin{align}
& f_{A_{\leq K-1}}(\bX_{\leq K-1},\bM_{R_{K-1}},\bX_{K}^*)\prod_{k=1}^{K-1}f_{A_{\leq k}}(\bM_{S_k}|\bX_{\leq k},\bM_{R_{k}},\bX_{>k}^*)\nonumber\\
& = f_{A_{\leq K-1}}(\bX_{\leq K-1},\bM_{R_{K-1}},\bM_{S_{K-1}},\bX_{K}^*)\prod_{k=1}^{K-2}f_{A_{\leq k}}(\bM_{S_k}|\bX_{\leq k},\bM_{R_{k}},\bX_{>k}^*)\nonumber\\
& = f_{A_{\leq K-1}}(\bX_{\leq K-1},\bM_{R_{K-2}},\bM_{K-1},\bX_{K}^*)\prod_{k=1}^{K-2}f_{A_{\leq k}}(\bM_{S_k}|\bX_{\leq k},\bM_{R_{k}},\bX_{>k}^*).\label{eq:proof1}
\end{align}

Similarly, we now integrate \eqref{eq:proof1} over the missing variables in $\bX_{K-1}$ according to $\bM_{K-1}$.  Given that none of the factors in
$\prod_{k=1}^{K-2}f_{A_{\leq k}}(\bM_{S_k}| \bX_{\leq k},\bM_{R_{k}},\bX_{>k}^*)$,
depend on these missing variables, and given the way $f_{A_{\leq K-1}}(\bX_{\leq K-1},\bM_{R_{K-2}},\bM_{K-1},\bX_{K}^*)$
is constructed (see generic step $k+1$ in Section \ref{ss:description}), we obtain 
$$f_{A_{\leq K-2}}(\bX_{\leq K-2},\bM_{R_{K-2}},\bX_{> K-2}^*)\prod_{k=1}^{K-2}f_{A_{\leq k}}(\bM_{S_k}| \bX_{\leq k},\bM_{R_{k}},\bX_{>k}^*).$$

These arguments and process can be repeated, sequentially integrating over the missing variables in $\bX_{k}$ according to $\bM_{k}$, $k=K-2,\dots,1$, finally obtaining the observed-data density $f(\bX^*)$.
\end{proof}

\subsection{Special Cases}\label{ss:extreme_cases}

It is worth describing two special sequential identification schemes that can be derived from our general presentation.  One is obtained when we take all $R_k=\{t_{k-1}+1,\dots,t_k\}\cup R_{k-1}$, $S_k=\emptyset$, and therefore $\bM_{R_k}=\bM_{\leq k}$.  In this case, each $A_{k+1}$ is on  the distribution of $\bX_{k+1}$ and $\bM_{k+1}$ given $\bX_{\leq k}, \bM_{\leq k}$ and $\bX_{>k+1}^*$, that is, the assumption conditions on the whole set of missingness indicators $\bM_{\leq k}$ and not just on a subset of these.  The other is obtained when we take all $R_k=\emptyset$, $S_k=\{t_{k-1}+1,\dots,t_k\}$, and therefore $\bM_{S_k}=\bM_{k}$.  In this case, each $A_{k+1}$ is on  the distribution of $\bX_{k+1}$ and $\bM_{k+1}$ given $\bX_{\leq k}$ and $\bX_{>k+1}^*$, that is, each assumption conditions on none of the missingness indicators $\bM_{\leq k}$.

\subsection{Connection with the Mechanisms of \cite{Robins97}}\label{ss:Robins}

An important particular case of our sequential identification strategy is obtained when all $\bM_{S_k}=\bM_{k}$ and each $A_k$ is taken to be a conditional MAR assumption, that is, when we assume that 
$f(\bM_{k}|\bX_{\leq k-1},\bX_{k},\bX_{>k}^*)=f(\bM_{k}|\bX_{\leq k-1},\bX_{k}^*,\bX_{>k}^*)$. 
Along with \eqref{eq:extra_assump}, this leads to the combined assumption
\begin{equation}\label{eq:Robins}
f(\bM_{k}| \bX,\bM_{>k})=f(\bM_{k}|\bX_{< k},\bX_{\geq k}^*).
\end{equation}
The missingness mechanism
derived from this approach corresponds to the group permutation
missingness of \cite{Robins97}. When each block contains
only one variable, it corresponds to the permutation missingness
mechanism of \cite{Robins97}.  If the ordering of the variables or blocks of variables is regarded as temporal, as in a
longitudinal study or a survey that asks questions in a fixed
sequence, \cite{Robins97} interpreted \eqref{eq:Robins} as follows: the 
nonresponse propensity at the current time period depends on the
values of study variables in the previous time periods, whether
  observed or not, but not on what is missing in the present and future time periods.

If the order of the blocks of variables was reversed, that is, if $A_1$ was on the distribution of $\bX_K$ and $\bM_K$ given $\bX_{<K}^*$,  $A_2$ was on the distribution of $\bX_{K-1}$ and $\bM_{K-1}$ given $\bX_{<K-1}^*$ and $\bX_K$, and so on, then we would have the following interpretation: the 
nonresponse propensity at the current time period depends on the
values of study variables in the future time periods, whether
  observed or not, but not on what is missing in the present and past time periods.
  This interpretation is arguably
  easier to explain in the context of 
  respondents answering a questionnaire.  The nonresponse propensity for question 
  $t$ can depend on the respondent's answers to questions that appear later in 
  the questionnaire and to questions that she has already answered, but
  not on the information that she has not revealed.

\section{Applications}\label{s:Examples}

\subsection{Sequential Identification for Two Categorical Variables}\label{s:2vars}

Consider two categorical random variables $X_1 \in \mathcal{X}_1=\{1,\dots,I\}$ and $X_2 \in \mathcal{X}_2=\{1,\dots,J\}$.  Let $M_1$ and $M_2$ be their missingness indicators.  
Let $\mathbb{P}$ denote the joint distribution of $(X_1,X_2,M_1,M_2)$.  The observed-data distribution  corresponds to the probabilities 
\begin{align*}
\pi_{ij00} & ~ \equiv ~ \mathbb{P}(X_1^*=i,X_2^*=j) ~ = ~ \mathbb{P}(X_1=i,X_2=j,M_1=0,M_2=0), \\
\pi_{i+01} & ~ \equiv ~ \mathbb{P}(X_1^*=i,X_2^*=*) ~ = ~ \mathbb{P}(X_1=i,M_1=0,M_2=1), \\
\pi_{+j10} & ~ \equiv ~ \mathbb{P}(X_1^*=*,X_2^*=j) ~ = ~ \mathbb{P}(X_2=j,M_1=1,M_2=0),\\
\pi_{++11} & ~ \equiv ~ \mathbb{P}(X_1^*=*,X_2^*=*) ~ = ~ \mathbb{P}(M_1=1,M_2=1),
\end{align*}
for $i\in\mathcal{X}_1$, $j\in\mathcal{X}_2$.  We seek to
construct a full-data distribution $\mathbb{P}_{A_{\leq 2}}(X_1,X_2,M_1,M_2)$
from the observed-data distribution $\mathbb{P}(X_1^*,X_2^*)$ by
imposing some assumptions $A_1$ and $A_2$.  
The goal is a full-data distribution such that $\mathbb{P}_{A_{\leq 2}}(X_1^*,X_2^*)=\mathbb{P}(X_1^*,X_2^*)$, that
is, we want $\mathbb{P}_{A_{\leq 2}}$ to be NPS.    

To use the general identification strategy presented in Section \ref{s:SeqIdent} we define 
each variable as its own block. With only two variables, set $R_1$ can be either $R_1=\{1\}$ or $R_1=\emptyset$. 
 We present two examples below corresponding to these two options.
\par

\begin{example}  We first consider $R_1=\{1\}$, $S_1=\emptyset$, and the following identifying assumptions:
$A_1 : ~ X_1\indep M_1| X_2^*$; and $A_2 : ~ X_2\indep M_2| M_1,X_1.$

Under $A_1$, $\mathbb{P}_{A_1}(X_1,M_1|X_2^*)=
\mathbb{P}_{A_1}(X_1|X_2^*)\mathbb{P}_{A_1}(M_1|X_2^*) = \mathbb{P}(X_1|X_2^*,M_1=0)\mathbb{P}(M_1|X_2^*)$, where $\mathbb{P}(X_1|X_2^*,M_1=0)$ and $\mathbb{P}(M_1|X_2^*)$ are identified from the observed data distribution.  When $X_2^*=j\neq *$, $\mathbb{P}(X_1=i|X_2^*=j,M_1=0)=\mathbb{P}(X_1=i|X_2=j,M_1=0,M_2=0)=\pi_{ij00}/\pi_{+j00}$, and $\mathbb{P}(M_1=m_1|X_2^*=j)=\mathbb{P}(M_1=m_1|X_2=j,M_2=0)=\pi_{+jm_10}/\pi_{+j+0}$.
Similarly, when $X^*_2=*$ we find
$\mathbb{P}(X_1=i|X_2^*=*,M_1=0)= \pi_{i+01}/\pi_{++01}$ and $\mathbb{P}(M_1=m_1|X_2^*=*) = \pi_{++m_11}/\pi_{+++1}$.  Since $\mathbb{P}(X_2^*)$ can be obtained from the observed-data distribution as $\mathbb{P}(X_2=j,M_2=0)=\pi_{+j+0}$ when $X_2^*=j\neq *$, and as $\mathbb{P}(M_2=1)=\pi_{+++1}$ when $X_2^*=*$, using $\mathbb{P}_{A_1}(X_1,M_1|X_2^*)$ we obtain a joint distribution for $(X_1,M_1,X_2^*)$ that relies on $A_1$, defined as $\mathbb{P}_{A_1}(X_1,M_1,X_2^*)\equiv \mathbb{P}_{A_1}(X_1,M_1|X_2^*)\mathbb{P}(X_2^*)$.  Note that $\mathbb{P}_{A_1}$ can be written as an explicit function of the observed-data distribution.

We now use $\mathbb{P}_{A_1}$ and identifying
assumption $A_2$ to obtain 
$\mathbb{P}_{A_{\leq 2}}(X_2,M_2|X_1,M_1).$
From the definition of $X_2^*$, $\mathbb{P}_{A_1}(X_1,M_1,X_2^*)$ can be written as $\mathbb{P}_{A_1}(X_1,M_1,X_2,M_2=0)$ when $X_2^*\neq *$ and $\mathbb{P}_{A_1}(X_1,M_1,M_2=1)$ when $X_2^*=*$.  From this we can obtain 
$$\mathbb{P}_{A_1}(M_2 = 1 | X_1,M_1)=\frac{\mathbb{P}_{A_1}(X_1,M_1,M_2=1)}{\mathbb{P}_{A_1}(X_1,M_1,M_2=1)+\sum_{x_2\in\mcX_2}\mathbb{P}_{A_1}(X_1,M_1,X_2=x_2,M_2=0)},$$
and 
$$\mathbb{P}_{A_1}(X_2|X_1,M_1,M_2=0)=\frac{\mathbb{P}_{A_1}(X_1,M_1,X_2,M_2=0)}{\sum_{x_2\in\mcX_2}\mathbb{P}_{A_1}(X_1,M_1,X_2=x_2,M_2=0)}.$$
We then obtain 
$\mathbb{P}_{A_{\leq 2}}(X_2,M_2|X_1,M_1) %\\\nonumber
 = \mathbb{P}_{A_{\leq 2}}(X_2|X_1,M_1)\mathbb{P}_{A_{\leq 2}}(M_2|X_1,M_1) %\\
 = \mathbb{P}_{A_1}(X_2|X_1,M_1,\allowbreak M_2=0)\mathbb{P}_{A_1}(M_2|X_1,M_1)$, %\label{eq:st2FSI_CI}
which gives us a way to obtain
$\mathbb{P}_{A_{\leq 2}}(X_2,M_2|X_1,M_1)$ as a function of the distribution
$\mathbb{P}_{A_1}$, which in turn is a function of the observed-data
distribution.  The final full-data distribution is obtained as $\mathbb{P}_{A_{\leq 2}}(X_1,M_1,X_2,M_2)\equiv \mathbb{P}_{A_{\leq 2}}(X_2,M_2|X_1,M_1)\mathbb{P}_{A_{1}}(X_1,M_1)$, where $\mathbb{P}_{A_{1}}(X_1,M_1)$ can be obtained from $\mathbb{P}_{A_{1}}$.
After some algebra we find
\begin{align*}\nonumber
\mathbb{P}_{A_{\leq 2}}(X_1=i,X_2=j,M_1=m_1,M_2=m_2)
 & = \frac{\frac{\pi_{ij00}}{\pi_{+j00}}\pi_{+jm_10}}{(\sum_l\frac{\pi_{il00}}{\pi_{+l00}}\pi_{+lm_10})^{m_2}}
\left(\frac{\pi_{i+01}}{\pi_{++01}}\pi_{++m_11}\right)^{m_2}.
\end{align*}
It is easy to see that $\mathbb{P}_{A_{\leq 2}}$ is NPS, that is $\mathbb{P}_{A_{\leq 2}}(X_1^*,X_2^*)=\mathbb{P}(X_1^*,X_2^*)$.
From the final distribution $\mathbb{P}_{A_{\leq 2}}(X_1,X_2,M_1,M_2)$ we can now obtain 
\begin{align}\label{eq:ttP}\nonumber
&\mathbb{P}_{A_{\leq 2}}(X_1=i,X_2=j) \\
& \hspace{.5cm}=\pi_{ij00}+\pi_{i+01}\frac{\pi_{ij00}}{\pi_{i+00}}+\pi_{+j10}\frac{\pi_{ij00}}{\pi_{+j00}}+
\pi_{++11}\frac{\pi_{i+01}}{\pi_{++01}}\frac{\frac{\pi_{ij00}}{\pi_{+j00}}\pi_{+j10}}{\sum_{l}\frac{\pi_{il00}}{\pi_{+l00}}\pi_{+l10}},
\end{align}
which is the distribution of inferential interest.  

In closing this example, we stress that the final full-data distribution
is not invariant to the order in which the blocks of variables appear in the sequence of assumptions.  From expression \eqref{eq:ttP} it is clear that
the final distribution of the study variables would be different had we
identified a distribution for $(X_1^*,X_2,M_2)$ first.  Indeed, if we were to follow the steps in the previous example but reversing the order of the variables, then we would be assuming that 
$X_2\indep M_2| X_1^*$ and 
$X_1\indep M_1| M_2,X_2$, 
which are different from $A_1$ and $A_2$ in this example.% in \eqref{a:ex1}.
\end{example}
  
\par

\begin{example} We now consider $R_1=\emptyset$, $S_1=\{1\}$, and the  identifying assumptions
$B_1 : ~  X_1\indep M_1| X_2^*$, and
$B_2 : ~  X_2\indep M_2| X_1$.

Assumption $B_1$ is the same as $A_1$ in Example 1, and so $\mathbb{P}_{B_1}(X_1,M_1,X_2^*)=\mathbb{P}_{A_1}(X_1,M_1,X_2^*)$.  Assumption $B_2$ is made conditioning only on $X_1$, so we need to marginalize over $M_1$ to obtain $\mathbb{P}_{B_1}(X_1,X_2^*)=\mathbb{P}_{B_1}(X_1,M_1=0,X_2^*)+\mathbb{P}_{B_1}(X_1,M_1=1,X_2^*)$:
\begin{align*}\nonumber
\mathbb{P}_{B_1}(X_1=i,X_2^*=j) & = \mathbb{P}_{B_1}(X_1=i,X_2=j,M_2=0)  = \frac{\pi_{ij00}}{\pi_{+j00}}\pi_{+j+0},\\
\mathbb{P}_{B_1}(X_1=i,X_2^*=*) & = \mathbb{P}_{B_1}(X_1=i,M_2=1)  = \frac{\pi_{i+01}}{\pi_{++01}}\pi_{+++1}.
\end{align*}
From this we can obtain 
$$\mathbb{P}_{B_1}(M_2 = 1 | X_1)=\frac{\mathbb{P}_{B_1}(X_1,M_2=1)}{\mathbb{P}_{B_1}(X_1,M_2=1)+\sum_{x_2\in\mcX_2}\mathbb{P}_{B_1}(X_1,X_2=x_2,M_2=0)},$$
and
$$\mathbb{P}_{B_1}(X_2|X_1,M_2=0)=\frac{\mathbb{P}_{B_1}(X_1,X_2,M_2=0)}{\sum_{x_2\in\mcX_2}\mathbb{P}_{B_1}(X_1,X_2=x_2,M_2=0)}.$$

Using assumption $B_2$, we obtain $\mathbb{P}_{B_{\leq 2}}(X_2,M_2| X_1) = \mathbb{P}_{B_{\leq 2}}(X_2| X_1)\mathbb{P}_{B_{\leq 2}}(M_2| X_1) = \mathbb{P}_{B_1}(X_2| X_1,\allowbreak M_2=0)\mathbb{P}_{B_1}(M_2| X_1)$. 
From this we obtain $\mathbb{P}_{B_{\leq 2}}(X_1,X_2,M_2)\equiv \mathbb{P}_{B_{1}}(X_1)\mathbb{P}_{B_{\leq 2}}(X_2,M_2|X_1)$ as
\begin{align}\nonumber
\mathbb{P}_{B_{\leq 2}}(X_1=i,X_2=j,M_2=m_2) 
& = \frac{\frac{\pi_{ij00}}{\pi_{+j00}}\pi_{+j+0}}{(\sum_l\frac{\pi_{il00}}{\pi_{+l00}}\pi_{+l+0})^{m_2}}\left(\frac{\pi_{i+01}}{\pi_{++01}}\pi_{+++1}\right)^{m_2}.
\end{align}
Marginalizing over $M_2$, we get
\begin{align}\nonumber
\mathbb{P}_{B_{\leq 2}}(X_1=i,X_2=j) & = \pi_{+j+0}\frac{\pi_{ij00}}{\pi_{+j00}}
+\pi_{+++1}\frac{\pi_{i+01}}{\pi_{++01}}\frac{\frac{\pi_{ij00}}{\pi_{+j00}}\pi_{+j+0}}{\sum_l\frac{\pi_{il00}}{\pi_{+l00}}\pi_{+l+0}}.
\end{align}

Assumptions $B_1$ and $B_2$ are enough to identify $\mathbb{P}_{B_{\leq 2}}(X_1,X_2,M_2)$, and thereby a distribution of the study variables $\mathbb{P}_{B_{\leq 2}}(X_1,X_2)$.  Although irrelevant for obtaining the distribution of the study variables, it is worth noticing that $B_1$ and $B_2$ do not allow us to fully identify
$\mathbb{P}_{B_{\leq 2}}(M_1|X_1,X_2,M_2)$.
From $\mathbb{P}_{B_1}(X_1,M_1,X_2^*)$ we have 
$\mathbb{P}_{B_1}(M_1|X_1,X_2,M_2=0)=\mathbb{P}(M_1|X_2,M_2=0)$ and 
$\mathbb{P}_{B_1}(M_1|X_1,M_2=1)=\mathbb{P}(M_1|M_2=1)$, 
but $\mathbb{P}_{B_{\leq 2}}(M_1|X_1,X_2,M_2=1)$ remains unidentified.  A full-data distribution $\tilde{\mathbb{P}}_{B_{\leq 2}}$ becomes identified under the extra assumption $\tilde{\mathbb{P}}_{B_{\leq 2}}(M_1|X_1,X_2,M_2=1)=\tilde{\mathbb{P}}_{B_{\leq 2}}(M_1|X_1,M_2=1),$
which corresponds to the extra assumption in \eqref{eq:extra_assump}.

The set of assumptions that we used in this example can be summarized in terms of
the missingness mechanism $\tilde{\mathbb{P}}_{B_{\leq 2}}(M_1,M_2|
X_1,X_2)=\tilde{\mathbb{P}}_{B_{\leq 2}}(M_1|
X_1,X_2,M_2)\tilde{\mathbb{P}}_{B_{\leq 2}}(M_2| X_1,X_2),$ where 
$\tilde{\mathbb{P}}_{B_{\leq 2}}(M_1|\allowbreak X_1,X_2,M_2=1)  =\mathbb{P}(M_1|M_2=1)$, %\\
$\tilde{\mathbb{P}}_{B_{\leq 2}}(M_1|X_1,X_2,M_2=0)  =\mathbb{P}(M_1|X_2,M_2=0)$, and %\\
$\tilde{\mathbb{P}}_{B_{\leq 2}}(M_2|\allowbreak X_1,X_2) =\mathbb{P}_{B_1}(M_2|X_1)$.
This corresponds to the permutation missingness (PM) mechanism of \cite{Robins97}.
  
As in Example 1, the full-data distribution changes when we modify the order in which the blocks of variables appear in the identifying assumptions.  Changing the order of the variables in this example would correspond to making the assumptions 
$X_2\indep M_2| X_1^*$ and 
$X_1\indep M_1| X_2$.
\end{example}
\par

\subsection{Sequential Identification for Partially Ignorable Mechanisms}\label{s:PartialIgno}

\cite{HarelSchafer09} introduced different notions of the missing data being {\em partially ignorable}.  In particular, in some scenarios one may think that the missingness is ignorable for some, but not for all the variables.  For example, consider a survey with two blocks of items $\bX_S$ and $\bX_N$, which contain responses to sensitive and non-sensitive questions, respectively.  Given the nature of these variables, one may think that the missingness among the $\bX_N$ variables could be ignored, but not among  $\bX_S$.  Our sequential identification procedure can be used to guarantee  identifiability under such partially ignorable mechanisms.  
 Our goal here is to show that we can identify a NPS full-data distribution $\tilde{f}_{A_{\leq 2}}(\bX_S,\bX_N,\bM_S,\bM_N)$ with the property that the missingness mechanism for $\bX_N$ is partially MAR given $\bM_S$ (\cite{HarelSchafer09}), that is,
\begin{equation}\label{as:PMAR}
\tilde{f}_{A_{\leq 2}}(\bM_N|\bX_N,\bX_S,\bM_S)=\tilde{f}_{A_{\leq 2}}(\bM_N|\bX_N^*,\bX_S^*),
\end{equation}
while $\tilde{f}_{A_{\leq 2}}(\bM_S|\bX_N,\bX_S)$ is determined by some nonignorable assumption.  
As before, we consider $f(\bX_S^*,\bX_N^*)$ to be known.

Following our sequential identification procedure, we first consider an identifying assumption for the distribution of $\bX_N$ and $\bM_N$ given $\bX_S^*$.  We use the conditional MAR assumption:
\begin{equation}\label{eq:A1PMAR}
A_1:~ f(\bM_N|\bX_N,\bX_S^*)=f(\bM_N|\bX_N^*,\bX_S^*).
\end{equation}
This assumption guarantees the existence of a distribution of the variables $\bX_N,\bM_N$, and $\bX_S^*$ with density 
$f_{A_1}(\bX_N,\bM_N,\bX_S^*) =f_{A_1}(\bM_N|\bX_N,\bX_S^*)f_{A_1}(\bX_N,\bX_S^*) 
\equiv f(\bM_N|\bX_N^*,\bX_S^*)f_{A_1}(\bX_N|\bX_S^*)f(\bX_S^*)$,
where $f_{A_1}(\bX_N|\bX_S^*)$ can be obtained from $f(\bX_N^*|\bX_S^*)$ as described in page 28 of \cite{Robins97}.  

Taking $R_1=\emptyset$ in our identification procedure, we can now consider any identifying assumption, say $A_2$, for the distribution of $\bX_S$ and $\bM_S$ given $\bX_N$ that allows us to obtain $f_{A_{\leq 2}}(\bX_S,\bM_S|\bX_N)$ with the NPS property $f_{A_{\leq 2}}(\bX_S^*|\bX_N)=f_{A_{1}}(\bX_S^*|\bX_N)$.  For example, $A_2$ could come from one of the approaches mentioned in Section \ref{ss:NPS}.  We then define $f_{A_{\leq 2}}(\bX_N,\bX_S,\bM_S)\equiv f_{A_{\leq 2}}(\bX_S,\bM_S|\bX_N)f_{A_1}(\bX_N)$.

To fully identify a full-data distribution $\tilde{f}_{A_{\leq 2}}(\bX_S,\bX_N,\bM_S,\bM_N)$ that encodes assumptions $A_1$ and $A_2$, we further require the conditional missingness mechanism $\tilde{f}_{A_{\leq 2}}(\bM_N|\bX_N,\bX_S,\bM_S)$.  Under the extra assumption 
\begin{equation}\label{eq:PMARextra}
\tilde{f}_{A_{\leq 2}}(\bM_N|\bX_N,\bX_S,\bM_S)=\tilde{f}_{A_{\leq 2}}(\bM_N|\bX_N,\bX_S^*),
\end{equation}
and then using $A_1$ we have identified a full-data distribution with density 
$f(\bM_N|\bX_N^*,\bX_S^*)f_{A_{\leq 2}}(\bX_N,\allowbreak \bX_S,\bM_S)$.
The NPS property of this distribution is guaranteed by Theorem \ref{th:ident}.  

A possibility for the $A_2$ assumption could come from the itemwise conditionally independent nonresponse (ICIN) mechanism of \cite{SadinleReiter17}, which is NPS.  Denoting $\bX_{S}=(X_{S1},\dots,X_{Sp_S})$, the ICIN assumption for $\bX_S$ and $\bM_S$ given $\bX_N$ can be written as
\begin{equation}\label{as:condICIN}
X_{Sj}\indep M_{Sj}| \bX_{S(-j)}, \bM_{S(-j)}, \bX_N; ~ j=1,\dots,p_S;
\end{equation}
where $\bX_{S(-j)}$ is the vector obtained from removing the $j$th entry of $\bX_{S}$, likewise for $\bM_{S(-j)}$.  Our sequential identification procedure guarantees that assumptions in \eqref{as:PMAR} and \eqref{as:condICIN} jointly  identify a NPS full-data distribution.

\begin{example}\label{ex:pim} For simplicity, consider $\bX_N=X_1$ and $\bX_S=(X_2,X_3)$.  The observed-data density can be written as the product of the density of the observed variables given each missingness pattern times the probability of the missingness pattern, that is
$f(\bX_{\bar{\bm}},\bM=\bm)=f_{\bm}(\bX_{\bar{\bm}})\pi_\bm$, which for three variables is given by 
$f_{000}(X_1,X_2,X_3)\pi_{000}$, 
$f_{100}(X_2,X_3)\pi_{100}$, $f_{010}(X_1,X_3)\pi_{010}$, $\dots$,  
$f_{011}(X_1)\pi_{011}$, 
and $\pi_{111}$.  Assumption $A_1$ in \eqref{eq:A1PMAR} in this case becomes
$A_1:~ X_1\indep M_1| X_2^*, X_3^*$,
which for all $x_2\in\mathcal{X}_2$ and $x_3\in\mathcal{X}_3$ can be expanded as
$X_1\indep M_1| M_2=0,M_3=0,X_2=x_2,X_3=x_3$;
$X_1\indep M_1| M_2=1,M_3=0,X_3=x_3$;
$X_1\indep M_1| M_2=0,M_3=1,X_2=x_2$; and 
$X_1\indep M_1| M_2=1,M_3=1$.
Using $A_1$ and the observed-data distribution we can obtain $f_{A_1}(X_1,M_1|X_2^*,X_3^*)$ as 
$f_{A_1}(X_1,M_1|X_2^*,X_3^*)=f_{A_1}(X_1|X_2^*,X_3^*)f_{A_1}(M_1|X_2^*,X_3^*)
=f(X_1|M_1=0,X_2^*,X_3^*)f(M_1|X_2^*,X_3^*)$,
where $f(X_1|M_1=0,X_2^*,X_3^*)$ is obtained from
\begin{align*}
f(X_1|M_1=0,M_2=0,M_3=0,X_2,X_3)&=f_{000}(X_1,X_2,X_3)/f_{000}(X_2,X_3),\\
f(X_1|M_1=0,M_2=1,M_3=0,X_3)&=f_{010}(X_1,X_3)/f_{010}(X_3),\\
f(X_1|M_1=0,M_2=0,M_3=1,X_2)&=f_{001}(X_1,X_2)/f_{001}(X_2),
\end{align*}
$f(X_1|M_1=0,M_2=1,M_3=1)=f_{011}(X_1)$; and $f(M_1|X_2^*,X_3^*)$ from
\begin{align*}
f(M_1=m_1|M_2=M_3=0,X_2,X_3)&\propto [f_{000}(X_2,X_3)\pi_{000}]^{I(m_1=0)}[f_{100}(X_2,X_3)\pi_{100}]^{I(m_1=1)},\\
f(M_1=m_1|M_2=1,M_3=0,X_3)&\propto [f_{010}(X_3)\pi_{010}]^{I(m_1=0)}[f_{110}(X_3)\pi_{110}]^{I(m_1=1)},\\
f(M_1=m_1|M_2=0,M_3=1,X_2)&\propto [f_{001}(X_2)\pi_{001}]^{I(m_1=0)}[f_{101}(X_2)\pi_{101}]^{I(m_1=1)},\\
f(M_1=m_1|M_2=1,M_3=1)&\propto [\pi_{011}]^{I(m_1=0)}[\pi_{111}]^{I(m_1=1)}.
\end{align*}
From this we can define $f_{A_1}(X_1,M_1,X_2^*,X_3^*)\equiv f_{A_1}(X_1,M_1|X_2^*,X_3^*)f(X_2^*,X_3^*)$, where $f(X_2^*,X_3^*)$ is obtained from
$f(M_2=0,M_3=0,X_2,X_3)=f_{000}(X_2,X_3)\pi_{000} + f_{100}(X_2,X_3)\pi_{100}$,
$f(M_2=1,M_3=0,X_3)=f_{010}(X_3)\pi_{010}+f_{110}(X_3)\pi_{110}$, %,\\
$f(M_2=0,M_3=1,X_2)=f_{001}(X_2)\pi_{001}+f_{101}(X_2)\pi_{101}$, and %\\
$f(M_2=1,M_3=1)=\pi_{011}+\pi_{111}$.

We now incorporate the ICIN assumption for the distribution of $(X_2,X_3,M_2,M_3)$ given $X_1$. We have 
$A_2: \{ X_2\indep M_2| X_3,M_3,X_1$; and $X_3\indep M_3| X_2,M_2,X_1\}$.
The identification results of \cite{SadinleReiter17} guarantee that assumption $A_2$ leads to a conditional distribution $f_{A_{\leq 2}}(X_2,X_3,M_2,M_3|X_1)$ with the NPS property $f_{A_{\leq 2}}(X_2^*,X_3^*|X_1)=f_{A_{1}}(X_2^*,X_3^*|X_1)$, where
$f_{A_{1}}(X_2^*,\allowbreak X_3^*| X_1)$ can be obtained easily from $f_{A_1}(X_1,M_1,X_2^*,X_3^*)$.  
Section 5.1 of \cite{SadinleReiter17} provides explicit formulae for the full-data distribution under the ICIN assumption as a function of the observed-data distribution, in the case of two variables.  
We can use those formulae here with $f_{A_{1}}(X_2^*,X_3^*|X_1)$ to obtain conditional ICIN full-data distributions that depend on $X_1$.  
To simplify the notation below we replace $f_{A_1}$ by $g$, and $f_{A_{\leq 2}}$ by $h$, and we denote $g_{m_2m_3}(X_2,X_3|X_1)\equiv g(X_2,X_3|X_1,M_2=m_2,M_3=m_3)$ and $h_{m_2m_3}(X_2,X_3|X_1)\equiv h(X_2,X_3|X_1,M_2=m_2,M_3=m_3)$.  
Following the formulae of \cite{SadinleReiter17} we obtain $h_{00}(X_2,X_3|X_1)=g_{00}(X_2,X_3|X_1)$, $h_{01}(X_2,X_3|X_1)=g_{00}(X_2,X_3|X_1)g_{01}(X_2|X_1)/g_{00}(X_2|X_1)$, $h_{10}(X_2,X_3|X_1)=g_{00}(X_2,X_3|X_1)g_{10}(X_3|\allowbreak X_1)/g_{00}(X_3|X_1)$,
\begin{align*}
h_{11}(X_2,X_3|X_1)&\propto \frac{g_{00}(X_2,X_3|X_1)g_{01}(X_2|X_1)g_{10}(X_3|X_1)}{g_{00}(X_2|X_1)g_{00}(X_3|X_1)},
\end{align*}
and $h(M_2=m_2,M_3=m_3|X_1)=g(M_2=m_2,M_3=m_3|X_1)$.

Putting everything together, we obtain
\begin{align*}
f_{A_{\leq 2}}(X_1,X_2,X_3,M_2=m_2,M_3=m_3)&=h_{m_2m_3}(X_2,X_3|X_1)g(M_2=m_2,M_3=m_3,X_1),
\end{align*}
from which we can obtain the distribution of the study variables $f_{A_{\leq 2}}(X_1,X_2,X_3)$.  A full-data density $\tilde{f}_{A_{\leq 2}}(X_1,X_2,X_3,M_1,M_2,M_3)$ becomes identified under the extra assumption \eqref{eq:PMARextra}.  This distribution therefore encodes the partial ignorability assumption \eqref{as:PMAR} for the missingness in $X_1$ and the ICIN assumption \eqref{as:condICIN} for $(X_2,X_3,M_2,M_3)$ given $X_1$.
\end{example}

\subsection{Usage in Sensitivity Analysis}\label{s:sensit}

To illustrate how this approach could be used for sensitivity analysis, we use data related to the 1991 plebiscite where Slovenians 
voted for independence from Yugoslavia (\cite{Rubinetal95}).  The data come from the Slovenian public opinion survey, which contained the  questions:
$X_I$: are you in favor of Slovenia's independence? $X_S$: are you in favor of Slovenia's secession from Yugoslavia? $X_A$: will you attend the plebiscite?
We call these the Independence, Secession, and Attendance questions, respectively.
The possible responses to each of these were \textsc{yes}, \textsc{no}, and \textsc{don't know}.  We follow \cite{Rubinetal95} in treating \textsc{don't know} as missing data. 

We 
use the missingness mechanism presented in Example \ref{ex:pim}, and compare it with an ignorable approach, a pattern mixture model (PMM) under the complete-case missing-variable restriction (\cite{Little93}), and the ICIN approach of \cite{SadinleReiter17} which here corresponds to assuming 
$X_j\indep M_j | \bX_{-j},\bM_{-j}; ~ j=I,S,A$.
The Attendance question is arguably the less sensitive of the three questions studied here, so it seems reasonable to consider a partially ignorable mechanism where the nonresponse for $X_A$ is ignorable given $M_I$ and $M_S$, as in \eqref{as:PMAR}, and the nonresponse for $X_I$ and $X_S$ satisfy the ICIN assumption conditioning on $X_A$, as in \eqref{as:condICIN} in Example \ref{ex:pim}.  Nothing prevents us from using this approach exchanging the roles of the variables, so we also consider two other partially ignorable  missingness mechanisms, depending on whether we take the nonresponse for $X_I$ or for $X_S$ as ignorable.

To implement these approaches, we first use a Bayesian approach to estimate the observed-data distribution.  The observed data can be organized in a three-way contingency table with cells corresponding to each element of $\{$\textsc{yes}, \textsc{no}, \textsc{don't know}$\}^3$, as presented in \cite{Rubinetal95}.  Treating these data as a random sample from a multinomial distribution, we take a conjugate prior distribution for the cell probabilities: symmetric Dirichlet with parameter $1/27$.  
 We take 5,000 draws from the posterior distribution of the observed-data distribution, and for each of these we apply the formulae presented in Example 3 to obtain posterior draws of the full-data distribution under each of the three partially ignorable mechanisms.  We use a similar approach to obtain posterior draws of the full-data distribution under ICIN, PMM, and ignorability.  For each of the approaches we  then obtain draws of the implied probabilities for the items.

Figure \ref{fig:Slovenia} displays 5,000 draws from the joint posterior distribution of $\mathbb{P}$(Independence = \textsc{yes}, Attendance = \textsc{yes}) and $\mathbb{P}$(Attendance = \textsc{no}) under each of the six missingness mechanisms considered here. 
 Despite the fact that all of these approaches agree in their fit to the observed data, we obtain quite different inferences under each assumption. 
 When inferences are so sensitive to the identifying assumptions, perhaps the most honest way to proceed is to report all the results under all assumptions deemed plausible given the context.  

\begin{figure}[h]
         \begin{subfigure}{0.32\textwidth}
                 \centering
								~~~~Ignorability \\\vspace{3mm}
								%\caption{}
                 \label{fig:SlovMAR}\vspace{-.3cm}
                 \includegraphics[width=1\textwidth]{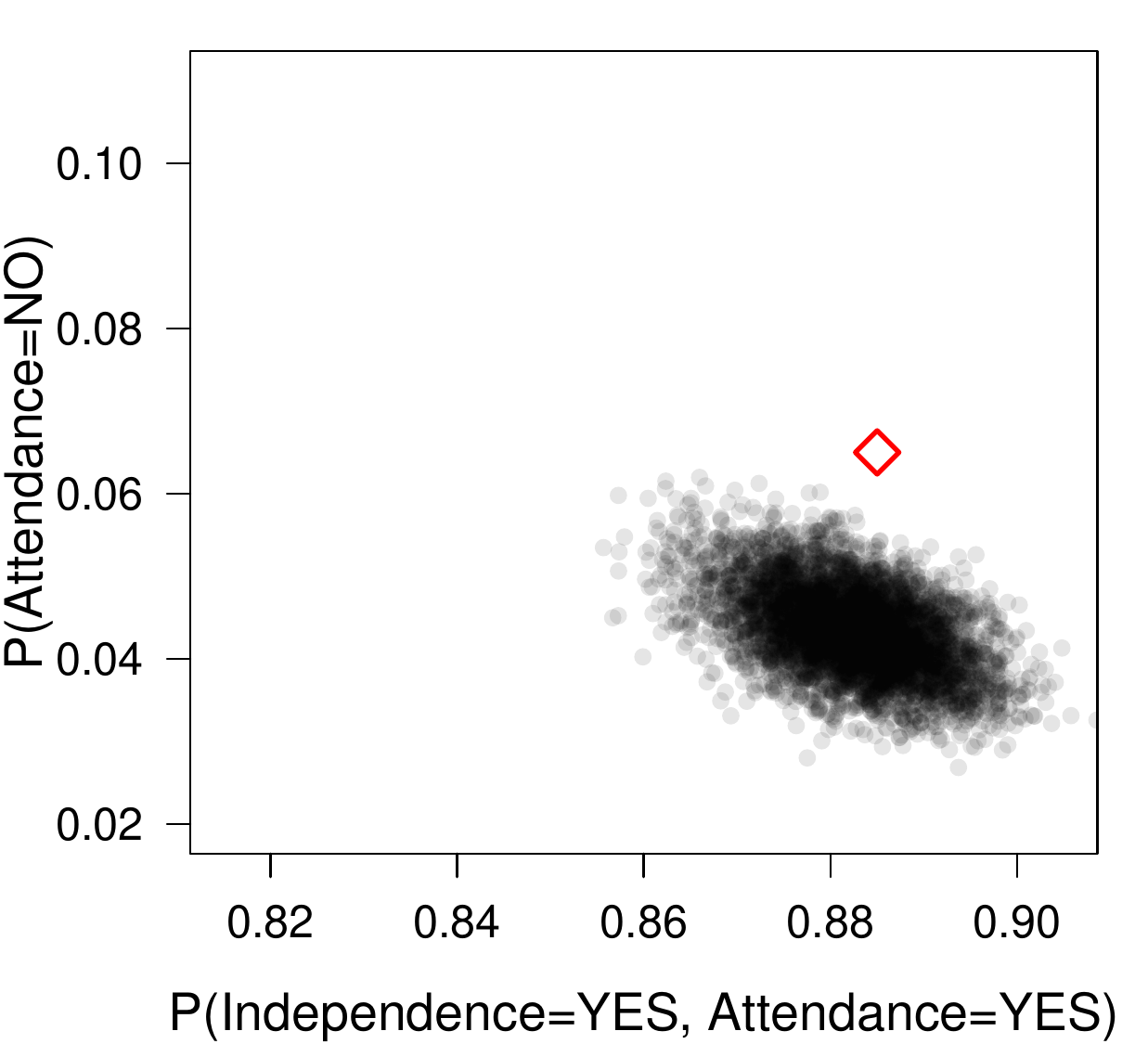}
         \end{subfigure}
     \begin{subfigure}{0.32\textwidth}
             \centering
						~~~Pattern Mixture \\\vspace{3mm}
						%\caption{}
             \label{fig:SlovPMM}\vspace{-.3cm}
             \includegraphics[width=1\textwidth]{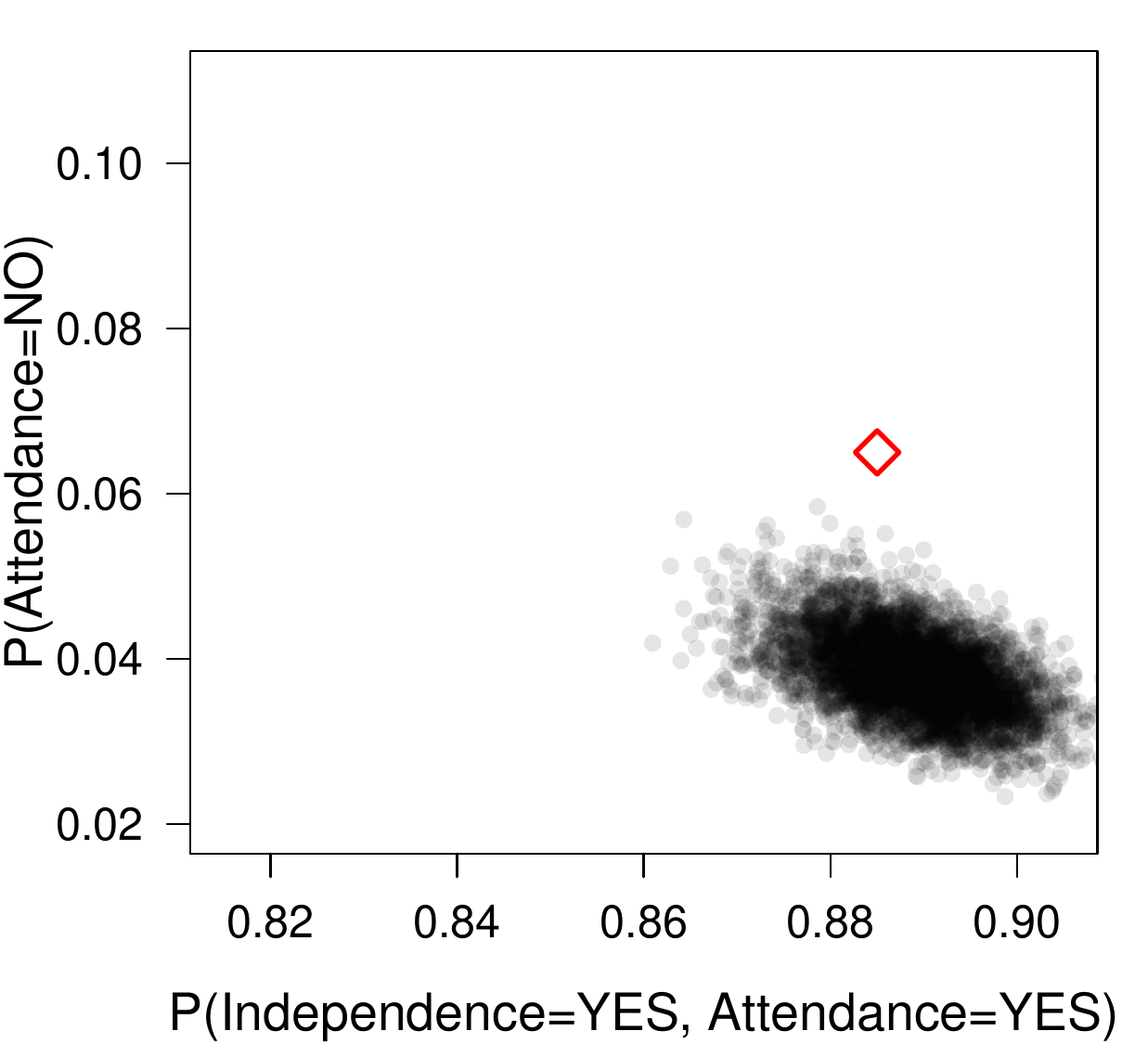}
     \end{subfigure}
		        \begin{subfigure}{0.32\textwidth}
								\centering
								~~~ICIN \\\vspace{3mm}%\caption{}MAR
                 \label{fig:SlovIMAR}\vspace{-.3cm}
                 \includegraphics[width=1\textwidth]{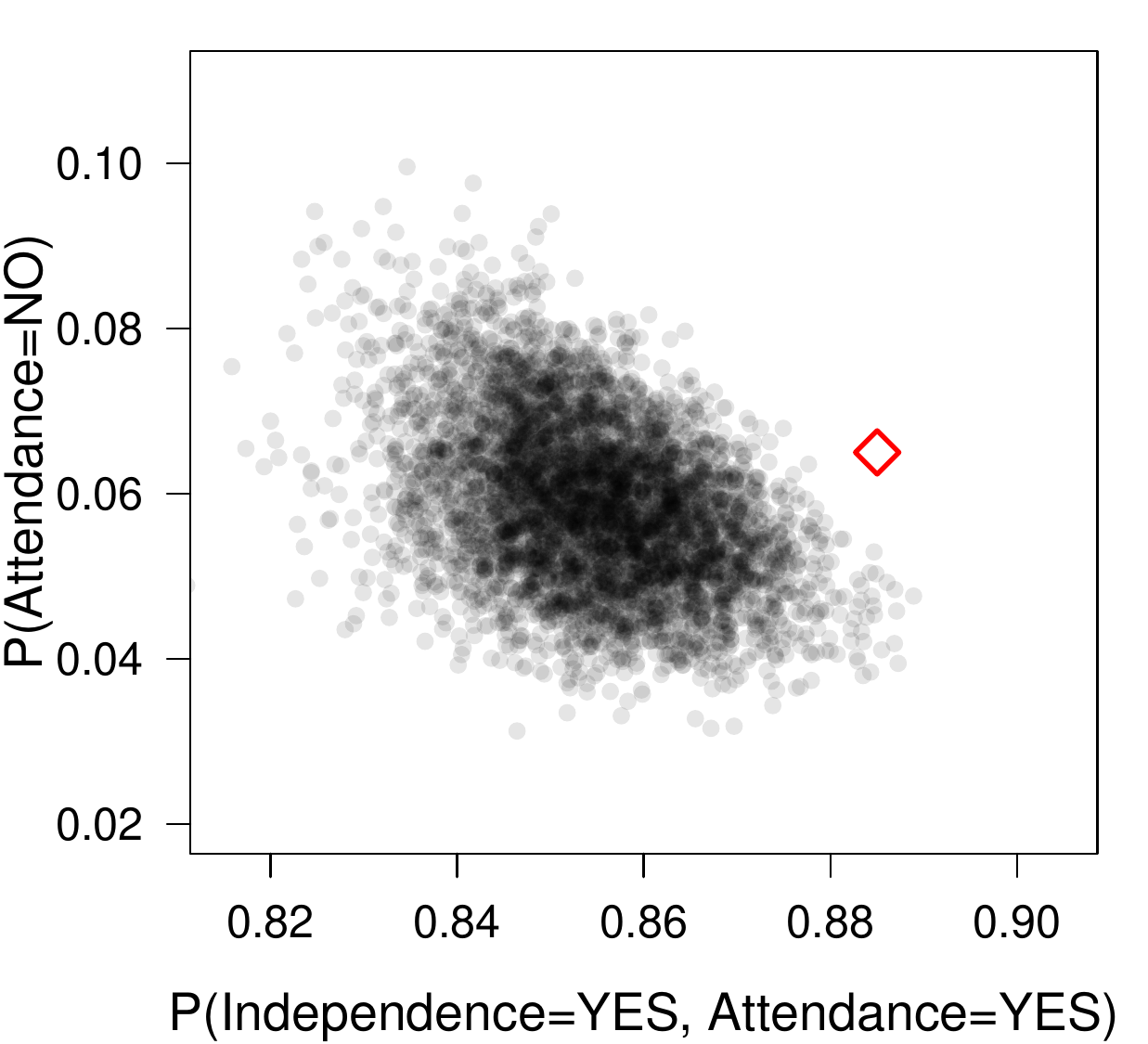}
         \end{subfigure}\vspace{2mm}\\
		\begin{subfigure}{0.32\textwidth}
             \centering
						~~~~PIM - Attendance \\\vspace{3mm}
						%\caption{}
             \label{fig:SlovPIM_Att}\vspace{-.3cm}
             \includegraphics[width=1\textwidth]{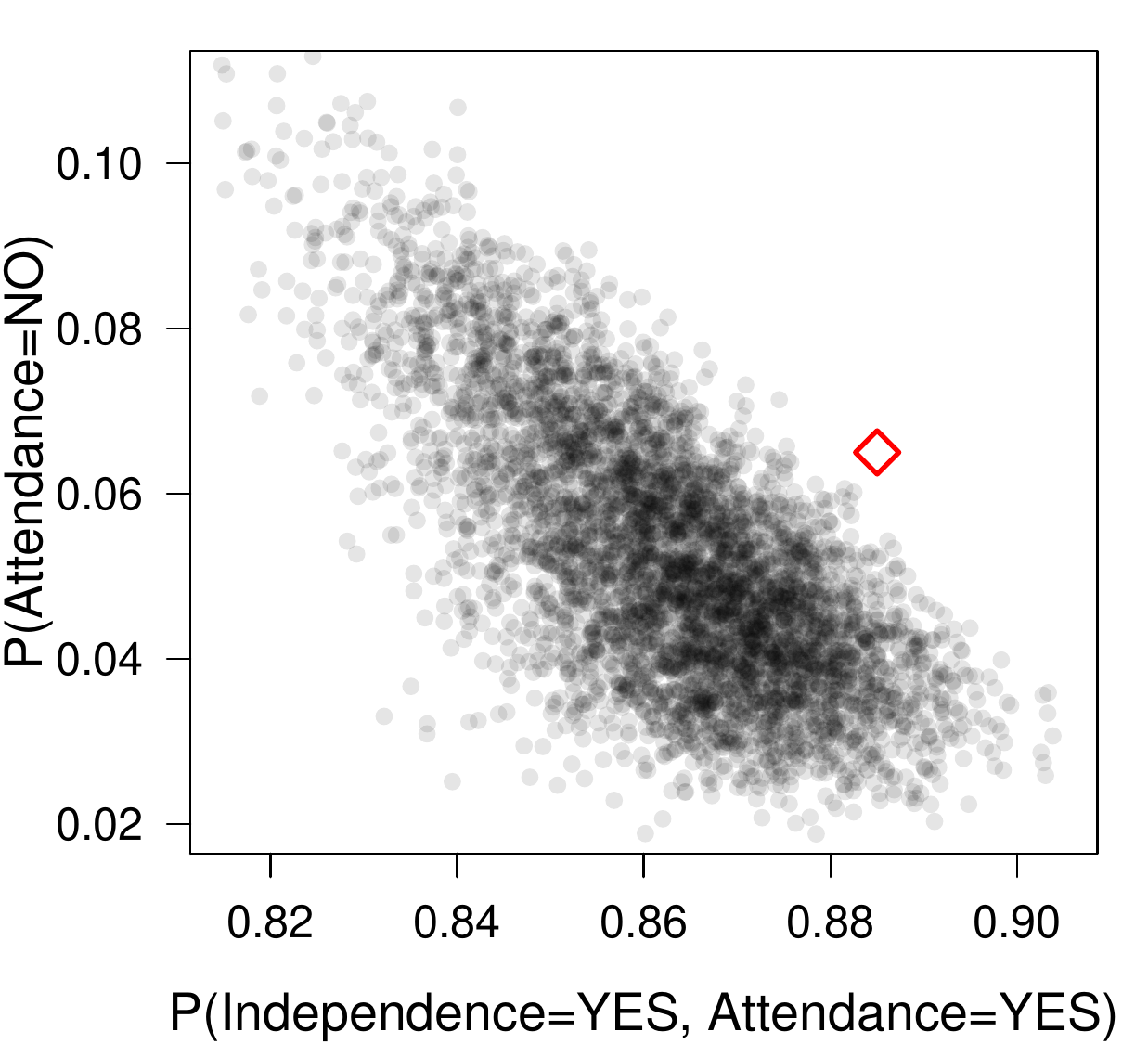}
     \end{subfigure}
		\begin{subfigure}{0.32\textwidth}
             \centering
						~~~PIM - Independence \\\vspace{3mm}
						%\caption{}
             \label{fig:SlovPIM_Ind}\vspace{-.3cm}
             \includegraphics[width=1\textwidth]{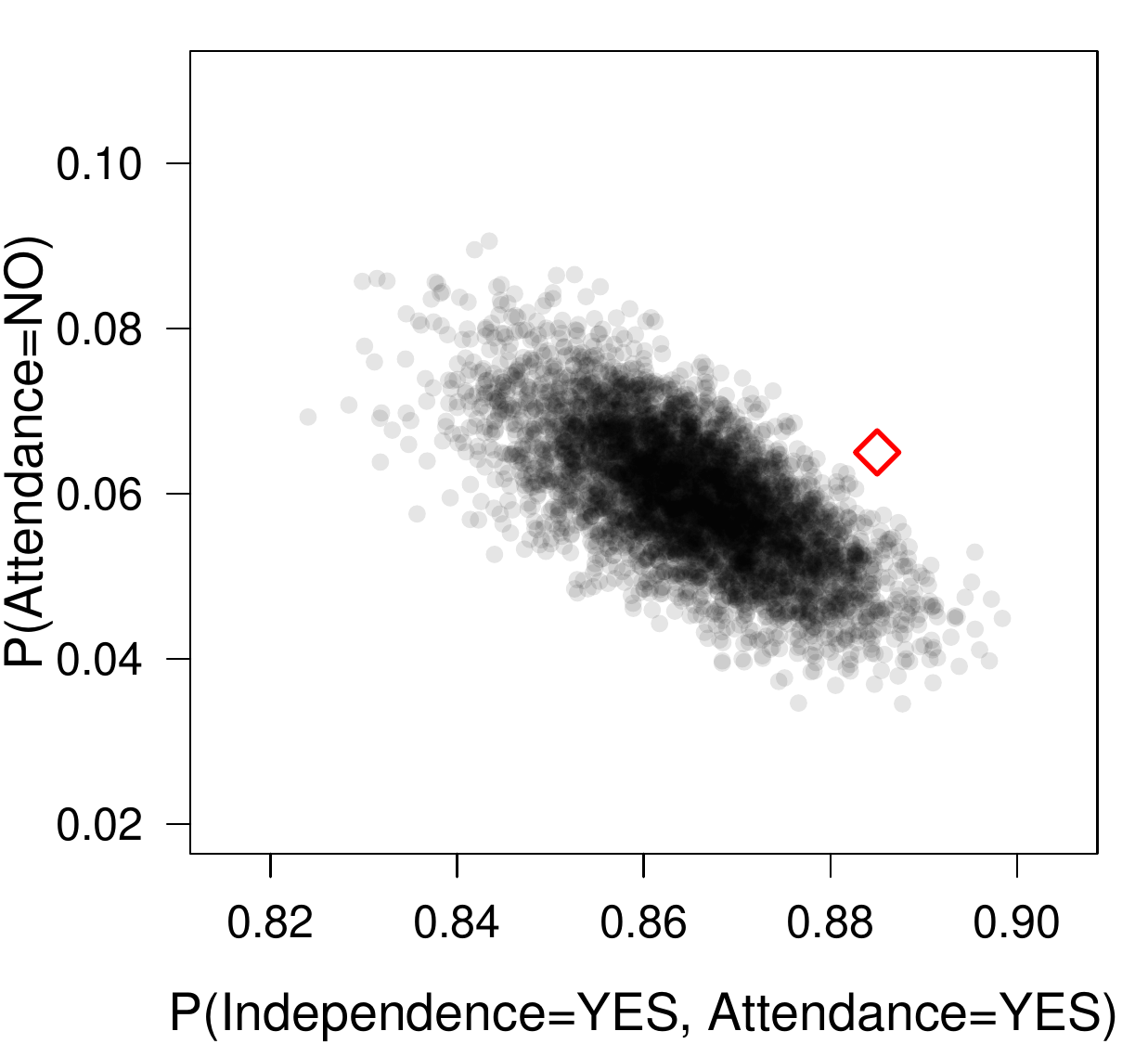}
     \end{subfigure}
		\begin{subfigure}{0.32\textwidth}
             \centering
						~~~~PIM - Secession \\\vspace{3mm}
						%\caption{}
             \label{fig:SlovPIM_Sec}\vspace{-.3cm}
             \includegraphics[width=1\textwidth]{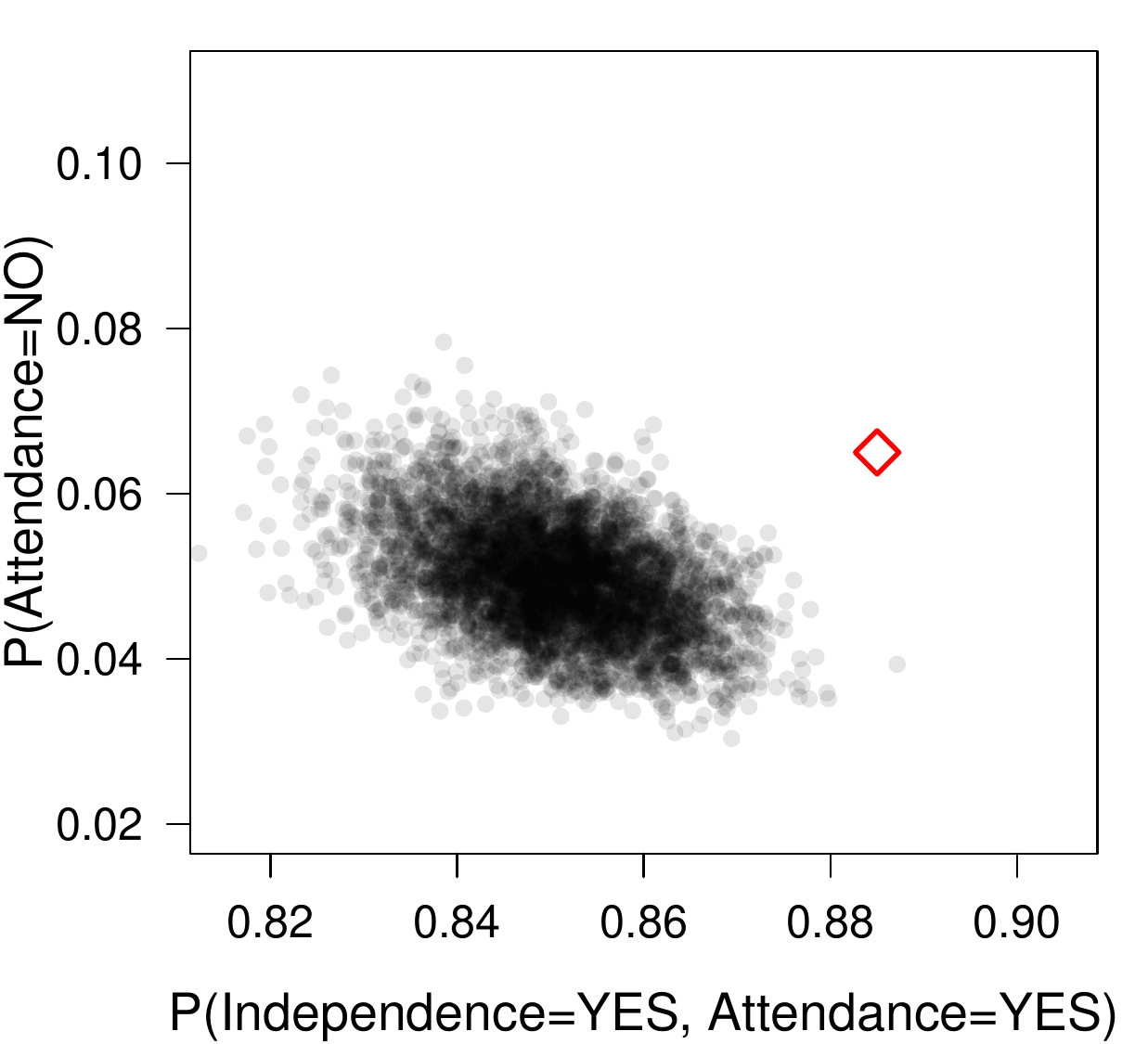}
     \end{subfigure}
\caption{Samples from joint posterior distributions of $\mathbb{P}$(Independence = Yes, Attendance = Yes) and $\mathbb{P}$(Attendance = No). Pattern mixture model under the complete-case missing-variable restriction.  The three partially ignorable missingness (PIM) mechanisms correspond to which variable we take as having ignorable missingness. The plebiscite results are represented by $\color{red}{\boldsymbol \diamond}$.
These are shown to illustrate differences between approaches and not to declare better vs worse assumptions for these data.} \label{fig:Slovenia}
\end{figure}

\section{Discussion}\label{s:disc}

The sequential identification procedure can be set up in many different ways, leading to different possibilities for constructing nonignorable missingness mechanisms.  The main differences among these possibilities lie in the assumptions about how missingness from any
one block of variables affects missingness in other blocks, as illustrated in the examples of Section \ref{ss:extreme_cases} and Section \ref{s:Examples}.
In general, the procedure  allows for different levels of dependence on missing variables while ensuring non-parametric saturated models, which 
provides a useful framework for sensitivity analysis. 

Although we presented our identification strategy for arbitrary $K$
blocks of variables, we expect that in practice most analysts would
use $K=2$ blocks when the variables do not naturally fall into ordered blocks of variables.  For example, analysts may want to partition
variables into one group that requires careful assessment of sensitivity
to various missingness mechanisms, such as outcome variables in
regression modeling with high fractions of missingness, and a second group that can be treated with generic missingness
mechanisms like conditional MAR, such as covariates with low fractions
of missingness.  These cases require partially ignorable mechanisms like those in Section \ref{s:PartialIgno}. 
Another scenario where two blocks naturally might
arise is when analysts have prior information on how the
missingness occurs for a set of variables but not for the rest.
Related, analysts might have auxiliary information 
on the marginal distribution of a few variables, perhaps from a census or other
surveys, that
enable the identification of mechanisms where the probability of nonresponse for a variable depends explicitly on the variable itself (\cite{Hiranoetal01,Dengetal13}).  

Our sequential identification procedure provides a
constructive way of obtaining estimated full-data distributions from
estimated observed-data distributions while ensuring non-parametric
saturated models.  To implement these approaches in practice, one
needs sufficient numbers of observations for each missing data
pattern, so as to allow accurate non-parametric estimation of the
observed-data distribution.   This can be challenging in modest-sized
samples with large numbers of variables.  Of course, this is the case
with most methods for handling missing data, including pattern mixture
models. In such cases, one may have to sacrifice non-parametric
saturated modeling of the observed data in favor of parametric models. 
\par

%%%%%%%%%%%%%%%%%%%%%%%%%%%%%%%%%%%%%%%%%%%%%%%%%%%%%%%%%%%%%%%%%%%%%%%%%%%%%%%%%%%%%%%%%%%%%%%%%%%%%%%%%%%%%%%%%%%%%%%%%%%%
%%%%%%%%%%%%%%%%%%%%%%%%%%%%%%%%%%%%%%%%%%%%%%%%%%%%%%%%%%%%%%%%%%%%%%%%%%%%%%%%%%%%%%%%%%%%%%%%%%%%%%%%%%%%%%%%%%%%%%%%%%%%
\vskip 14pt
\section*{Acknowledgements}

This research was supported by the grant NSF SES 11-31897.
\par

%%%%%%%%%%%%%%%%%%%%%%%%%%%%%%%%%%%%%%%%%%%%%%%%%%%%%%%%%%%%%%%%%%%%%%%%%%%%%%%%%%%%%%%%%%%%%%%%%%%%%%%%%%%%%%%%%%%%%%%%%%%%

\bibhang=1.7pc
\bibsep=2pt
\fontsize{9}{14pt plus.8pt minus .6pt}\selectfont
\renewcommand\bibname{\large \bf References}

\end{document}